\documentclass{article}
\usepackage[T2A]{fontenc}
\usepackage[cp1251]{inputenc}
\usepackage{amssymb}
\usepackage{amsxtra}
\usepackage{amsmath}
\usepackage{amsthm}
\usepackage{hyperref}

\usepackage[russian,english]{babel}

\evensidemargin 14.6mm
\marginparwidth 5mm
\topmargin 14.6mm
\textheight 220mm
\textwidth 140mm
\headheight 0mm
\headsep 0mm
\footskip 10mm

\DeclareMathOperator{\UU}{U}

\DeclareMathOperator{\WW}{W}

\DeclareMathOperator{\Spec}{Spec}

\DeclareMathOperator{\im}{im\,}
\DeclareMathOperator{\Aut}{Aut\,}

\DeclareMathOperator{\Gal}{Gal}
\DeclareMathOperator{\lev}{lev}
\DeclareMathOperator{\der}{der}

\newcommand{\ad}{ad}
\renewcommand{\sc}{sc}

\DeclareMathOperator{\ZZ}{{\mathbb Z}}

\DeclareMathOperator{\FF}{{\mathbb F}}

\newtheorem{lem}{Lemma}
\newtheorem*{thm*}{Theorem}
\newtheorem{thm}{Theorem}

\newtheorem*{cor*}{Corollary}

\let\l\left
\let\r\right

\let\eps\varepsilon
\let\st\scriptstyle

\title{Elementary subgroup of an isotropic reductive group is perfect}
\author{A. Luzgarev\thanks{The author is supported by RFBR 09-01-00784, RFBR 09-01-00878 and RFBR 09-01-90304.},
A. Stavrova\thanks{The author is supported by RFBR 09-01-00878 and RFBR 09-01-90304.}}
\date{December 31, 2009}
\begin{document}
\selectlanguage{english}
\maketitle

\section{Introduction}

Let $R$ be a commutative ring with 1, and let $G$ be an isotropic reductive algebraic group over $R$.
In~\cite{PS} Victor Petrov and the second author introduced a notion of an elementary subgroup $E(R)$
of the group of points $G(R)$.
In this note we prove that, as one might expect from the split case (e.g.,~\cite{Stein}) as
well as from the field case (e.g.,~\cite{Tits64}), under natural assumptions
the elementary subgroup of a reductive group is perfect.

More precisely, assume that $G$ is isotropic in the following strong sense:
it possesses a parabolic subgroup that intersects properly any semisimple normal subgroup of $G$.
Such a parabolic subgroup $P$ is called {\it strictly proper}. Denote
by $E_P(R)$ the subgroup of $G(R)$ generated by the $R$-points of the unipotent
radicals of $P$ and of an opposite parabolic subgroup $P^-$.
The main theorem of~\cite{PS} states that $E_P(R)$
does not depend on the choice of $P$, as soon as
for any maximal ideal $M$ of $R$ all irreducible components of the relative
root system of $G_{R_M}$ (see~\cite[Exp. XXVI, \S 7]{SGA} for the definition) are of rank $\ge 2$.
Under this assumption, we call $E_P(R)$ the {\it elementary
subgroup} of $G(R)$ and denote it simply by $E(R)$.
In particular, $E(R)$ is normal in $G(R)$.
This definition of $E(R)$ generalizes the well-known definition of an elementary subgroup of a Chevalley group (or, more
generally, of a split reductive group), as well as several other definitions of an elementary subgroup
of isotropic classical groups and simple groups over fields~\cite{Bass,Tits64,VasO,VasSp,BakVav2000}.

By the structure constants of a root system we mean the structure constants of the corresponding
semisimple complex Lie group, or, in other words, constants appearing in the Chevalley commutator
formulas for the corresponding Chevalley group. They are among $\pm1$, $\pm 2$, $\pm 3$.

\begin{thm}\label{th:main}
Let $G$ be an isotropic reductive algebraic group over a commutative ring $R$.
Assume that for any maximal ideal $M$ of $R$ all irreducible components of the relative
root system of $G_{R_M}$ are of rank $\ge 2$, and,
if one of the irreducible components of the (usual) root system of $G_{R_M}$ is of type $B_2=C_2$ or $G_2$,
that the residue field $R_M/MR_M$ is not isomorphic to $\FF_2$.
Then $E(R)=[E(R),E(R)]$.
\end{thm}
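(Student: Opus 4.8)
The plan is to reduce the statement, via the structure results of \cite{PS}, to putting each relative root subgroup into $[E(R),E(R)]$, and then to exploit the relative Chevalley commutator formula together with the action of the semisimple elements $h_\alpha(\lambda)$. Throughout put $\Gamma=[E(R),E(R)]$; we must show $E(R)\subseteq\Gamma$. By \cite{PS} the group $E(R)=E_P(R)$ is independent of the strictly proper parabolic $P$, and, fixing $P$, it is generated by the relative root subgroups $U_\alpha(R)$ with $\alpha\in\Phi_P$ (the $R$-points of the unipotent radicals of $P$ and of $P^-$ decomposing as products of the $U_\alpha(R)$ along the root filtration). Hence it suffices to prove $U_\alpha(R)\subseteq\Gamma$ for every $\alpha$. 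Since root subgroups attached to distinct irreducible components of $\Phi_P$ commute, we may assume $\Phi_P$ irreducible, hence of rank $\ge 2$. Two inputs from \cite{PS} will be used: the relative commutator formula $[u_\alpha(a),u_\beta(b)]=\prod_{i,j>0}u_{i\alpha+j\beta}\!\big(N^{ij}_{\alpha\beta}(a,b)\big)$ for non-proportional $\alpha,\beta$, with explicit maps $N^{ij}_{\alpha\beta}$; and the fact that $E(R)$ contains, for each $\alpha\in\Phi_P$ and $\lambda\in R^{*}$, an element $h_\alpha(\lambda)$ of the rank-one subgroup $\langle U_{\pm\alpha}(R)\rangle$ that conjugates $U_\beta(R)$ by multiplication by $\lambda^{\langle\beta,\alpha^{\vee}\rangle}$.

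The basic mechanism is a \emph{killing step}: if $\alpha=i_0\beta+j_0\gamma$ is a factor of $[U_\beta(R),U_\gamma(R)]$ whose coefficient map $N^{i_0j_0}_{\beta\gamma}$ has image generating $U_\alpha(R)$, and all the other factors $U_{i\beta+j\gamma}(R)$ of that commutator already lie in $\Gamma$, then $U_\alpha(R)\subseteq\Gamma$ --- for the image of $[u_\beta(b),u_\gamma(c)]$ in $E(R)/\Gamma$ is trivial, hence so is the image of $u_\alpha\!\big(N^{i_0j_0}_{\beta\gamma}(b,c)\big)$, and these generate $U_\alpha(R)$. The condition on $N^{i_0j_0}_{\beta\gamma}$ is the relative counterpart of the relevant structure constant being $\pm1$; this holds when $\beta,\gamma$ span a subsystem of type $A_2$ (then $i_0=j_0=1$ and there are no other factors), and also for the short factor of a $B_2$-type commutator $[u_{\mathrm{long}},u_{\mathrm{short}}]$ and the long factor of a $C_2$-type commutator $[u_{\mathrm{short}},u_{\mathrm{long}}]$. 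Now: if $\Phi_P$ is simply laced of rank $\ge 2$, every root is a sum of two roots spanning an $A_2$ with no further combination root, so the killing step applied root by root gives $E(R)\subseteq\Gamma$ immediately. If $\Phi_P$ is of type $B_\ell$, $C_\ell$ ($\ell\ge 3$), or $F_4$, the roots of one length span a simply-laced subsystem of rank $\ge 2$ (long for $B_\ell$, $F_4$; short for $C_\ell$) and are disposed of first; the roots of the other length then follow from the $B_2$- and $C_2$-type commutators above, whose unique auxiliary factor has already been placed in $\Gamma$ and whose relevant constant is $\pm1$. Non-reduced $\Phi_P$ of rank $\ge 3$ is handled the same way, the divisible roots going with the long orbit and $U_{2\alpha}$ being extracted from the commutator of $U_\alpha$ with itself.

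What remains are the exceptional types $B_2=C_2$, $G_2$, and non-reduced systems of rank $2$, where some root $\alpha$ is reachable only through a commutator whose relevant structure constant is $\pm2$ (for $G_2$ this is the short-root commutator; the long roots of $G_2$ still form an $A_2$, so the constant $\pm3$ never intervenes, which is why $\FF_3$ need not be excluded). Fix such an $\alpha$ and apply the killing step after all auxiliary roots are in $\Gamma$: one obtains that the canonical homomorphism $\psi\colon U_\alpha(R)\to E(R)/\Gamma$ annihilates $2\cdot U_\alpha(R)$. Choosing a relative root $\alpha_0$ with $k:=\langle\alpha,\alpha_0^{\vee}\rangle\in\{1,2\}$, the relation $[h_{\alpha_0}(\lambda),u_\alpha(s)]=u_\alpha\!\big((\lambda^{k}-1)s\big)$ shows $\psi$ also annihilates $(\lambda^{k}-1)\cdot U_\alpha(R)$ for every $\lambda\in R^{*}$. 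Thus $\psi$ factors through $U_\alpha(R)/J\,U_\alpha(R)$ with $J=2R+\sum_{\lambda\in R^{*}}(\lambda^{k}-1)R$, and a short ring-theoretic argument using the hypothesis shows $J=R$: the obstruction would be a nonzero quotient ring in which $2=0$ and every unit is a $k$-th root of unity, and for $k\le 2$ such a ring has $\FF_2$ among its residue fields. Hence $\psi=0$, i.e.\ $U_\alpha(R)\subseteq\Gamma$; with this entry point the remaining roots of these types follow by the killing step as before. In all cases $E(R)=\langle U_\alpha(R):\alpha\in\Phi_P\rangle\subseteq\Gamma$, so $E(R)=\Gamma$.

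The main difficulty is twofold. First is the root-system bookkeeping: exhibiting, type by type --- and especially for $B_2$, $G_2$ and the small-rank non-reduced systems --- an ordering of $\Phi_P$ in which each root is produced by a commutator whose coefficient map is generating (``constant $\pm1$'') and all of whose auxiliary factors come earlier; this combinatorics is exactly what forces the exclusion of the residue field $\FF_2$ for types $B_2$ and $G_2$. Second, one must establish the two structural inputs from \cite{PS} in the generality of isotropic --- not merely split --- reductive groups: that the leading maps of $A_2$-type commutators of relative root subgroups have generating image, and that $E(R)$ contains the elements $h_\alpha(\lambda)$ acting by the stated characters. The concluding ring-theoretic step $J=R$ is elementary but must be argued with some care.
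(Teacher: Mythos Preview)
Your overall architecture --- reduce to the relative root elements $X_A(v)$ and feed them into $[E(R),E(R)]$ via the relative Chevalley commutator formula --- is the same as the paper's. The divergence, and the gap, is in how the rank-$2$ cases are handled.

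\textbf{The $h_\alpha(\lambda)$ step is not available in the relative setting.} You assume that for each relative root $\alpha\in\Phi_P$ and each $\lambda\in R^{*}$ there is an element $h_\alpha(\lambda)\in\langle U_{\pm\alpha}(R)\rangle\subseteq E(R)$ acting on $U_\beta$ by scalar multiplication by $\lambda^{\langle\beta,\alpha^{\vee}\rangle}$. This is correct for split groups, but in the isotropic case the relative root subschemes are parametrised by projective modules $V_A$ and the rank-one subgroup $\langle U_\alpha,U_{-\alpha}\rangle$ is typically \emph{not} an $\mathrm{SL}_2$; it can be an $\mathrm{SU}_3$, an $\mathrm{SL}_1$ of an Azumaya algebra, etc. There is no result in \cite{PS} producing such $h_\alpha(\lambda)$ inside $E(R)$, and in general no reason to expect the clean identity $[h_{\alpha_0}(\lambda),X_\beta(v)]=X_\beta((\lambda^{k}-1)v)$. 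You flag this yourself as something that ``must be established'', but it is the crux of your argument for the rank-$2$ types and it is not established.

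\textbf{Your case split is on $\Phi_P$, but the hypothesis is on $\Phi$.} The theorem excludes residue field $\FF_2$ only when an irreducible component of the \emph{absolute} root system is $B_2$ or $G_2$. Your argument, by contrast, invokes the $\FF_2$ exclusion whenever the \emph{relative} system $\Phi_P$ is of type $B_2$, $G_2$, or $BC_2$. These do not match: for instance $\Phi=C_l$ with $l\ge 3$ can give $\Phi_P\cong C_2$ (the Tits index with $J=\{\alpha_i,\alpha_l\}$, $l=2i$), and here nothing prevents $R$ from having $\FF_2$ as a residue field. Your ideal $J=2R+\sum_{\lambda\in R^{*}}(\lambda^{k}-1)R$ need not be all of $R$ in that situation, so the argument collapses precisely in a case the theorem is meant to cover. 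Similarly, the surjectivity of the leading maps $N_{BC11}$ depends on $\Phi$ and the projection $\pi$, not on $\Phi_P$ alone; a purely $\Phi_P$-based case analysis cannot detect when Lemma~\ref{lem:comm}\,(c),(d) or the special Lemma~\ref{lem:comm2} are needed.

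The paper avoids both problems by localising in the Quillen--Suslin style: it first proves a polynomial version over \emph{local} $R$ (Lemma~\ref{lem:local}), where one may take $P$ minimal and invoke the classification of Tits indices~\cite{PS-tind} to pin down the pair $(\Phi,\Phi_P)$ exactly; the delicate rank-$2$ relative cases with large $\Phi$ are then dispatched by the surjectivity Lemmas~\ref{lem:comm} and~\ref{lem:comm2}, and the genuinely split $B_2$, $G_2$ cases by an explicit $\varepsilon$-identity (with $\varepsilon^{2}-\varepsilon\in R^{*}$) that plays the role your $h_\alpha(\lambda)$ would have played. The passage from local to global is then a standard ideal-plus-lifting argument using \cite[Lemmas~14,~15]{PS}. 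Without localisation you cannot access the Tits-index classification, and hence cannot choose $P$ well enough to make the relative rank-$2$ cases tractable.
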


Observe that the first condition of the theorem ensures that the
the elementary subgroup $E(R)$ of $G(R)$ is correctly defined, while the second one essentially eliminates
the well-known cases where the elementary subgroup of a {\it split} reductive group is not perfect.
Thus, the result is the strongest possible. One should note that, if we assume only that the rank of
relative root systems of $G_{R_M}$ is $\ge 1$, the question whether individual subgroups $E_P(R)$ are perfect
is of separate interest.

The proof of Theorem~\ref{th:main} is based on the notion of relative root subschemes (with respect to a parabolic subgroup)
of an isotropic group introduced in~\cite{PS},
the generalized Chevalley commutator formula~\cite[Lemma 9]{PS}, and localization in the
Quillen---Suslin style~\cite{Sus}. To shorten
the proof, we also make use of the classification of Tits indices of isotropic reductive groups
over local rings obtained in~\cite{PS-tind}.

\section{An abstract definition of relative roots}
In this section we recall the notion of an (abstract) system of relative roots introduced in~\cite{PS}
and prove a technical lemma.

Let $\Phi$ be a reduced root system in a Euclidean space with a scalar product $(-,-)$.
Let $\Pi=\{\alpha_1,\ldots,\alpha_l\}$ be a fixed system of simple roots of $\Phi$; if $\Phi$ is irreducible,
we assume that the numbering follows Bourbaki~\cite{Bu}. Let $D$ be the Dynkin diagram of $\Phi$.
We identify nodes of $D$ with the corresponding simple roots in $\Pi$. For a subgroup
$\Gamma\subseteq\Aut(D)$ and a $\Gamma$-invariant subset
$J\subseteq\Pi$, consider the projection
$$
\pi=\pi_{J,\Gamma}\colon\ZZ \Phi\longrightarrow \ZZ\Phi/\l<\Pi\setminus J;\ \alpha-\sigma(\alpha),\ \alpha\in J,\ \sigma\in\Gamma\r>.
$$
The set $\Phi_{J,\Gamma}=\pi(\Phi)\setminus\{0\}$ is called the system of {\it relative roots} corresponding to
the pair $(J,\Gamma)$.
The {\it rank} of $\Phi_{J,\Gamma}$ is the rank of $\pi(\ZZ \Phi)$ as a free abelian group.

It is clear that any relative root $A\in\Phi_{J,\Gamma}$ can be represented as a unique
linear combination of relative roots from $\pi(\Pi)$.
We say that $A\in\Phi_{J,\Gamma}$ is a {\it positive} (resp. {\it negative}) relative root, if it
is a non-negative (respectively, a non-positive) linear combination of the elements of $\pi(\Pi)$.
The sets of positive and negative relative roots will be denoted by
$\Phi_{J,\Gamma}^+$ and $\Phi_{J,\Gamma}^-$ respectively.
By the \emph{level} $\lev(A)$ of a relative root $A$ we mean the sum of coefficients in its decomposition.

Observe that $\Gamma$ acts on the set of irreducible components of the root system
$\Phi$. If this action is transitive, the system of relative roots
$\Phi_{J,\Gamma}$ is {\it irreducible}.
Clearly, any system of relative roots $\Phi_{J,\Gamma}$
is a disjoint union of irreducible ones; we call them the {\it irreducible components} of $\Phi_{J,\Gamma}$.

We will need the following lemma.
\begin{lem}\label{lem:decroots}
Let $\Phi$ be a root system with a fixed set of simple roots $\Pi$,
$\Gamma$ be a subgroup of $\Aut(D)$, and $J$ be a $\Gamma$-invariant subset of $\Pi$.
If a relative root $A\in\Phi_{J,\Gamma}$ lies in an irreducible component of rank $\ge 2$,
then there exist such non-collinear $B,C\in\Phi_{J,\Gamma}$ that $A=B+C$ and all relative roots
$iB+jC\in\Phi_{J,\Gamma}$, $i,j>0$, $(i,j)\neq (1,1)$, have the same sign as
$A$ and satisfy $|\lev (iB+jC)|>|\lev (A)|$.
\end{lem}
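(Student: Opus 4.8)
The plan is to produce $B$ and $C$ explicitly, after two reductions, and to observe that once they are in hand the stated properties come almost for free. First, replacing $(A,B,C)$ by $(-A,-B,-C)$, one reduces to $A\in\Phi_{J,\Gamma}^+$, so $\lev(A)\ge 1$. Second, I would use the structure of $\Phi_{J,\Gamma}$ recalled in \cite{PS}: it is a (possibly non-reduced) root system, the simple relative roots $\pi(\Pi)\setminus\{0\}$ form a base, every relative root is a $\ZZ$-combination of them with all coefficients of one sign and with $\lev$ equal to their sum, a relative root of positive level is positive, and the non-reduced irreducible components are of type $BC_n$. The real content of the lemma is the \emph{existence} of $B,C$: once $A=B+C$ with $B,C$ non-collinear and $\lev(B),\lev(C)\ge 1$, then for all $i,j>0$ with $(i,j)\ne(1,1)$
\[
\lev(iB+jC)=i\lev(B)+j\lev(C)>\lev(B)+\lev(C)=\lev(A)>0,
\]
so any $iB+jC$ that is a relative root is automatically positive and of strictly larger level than $A$. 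Thus it suffices to find such a pair, allowing a negative $C$ in the degenerate cases.

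If $\lev(A)\ge 2$ and $A$ is not twice a simple relative root, I would pick a simple relative root $\alpha'$ with $(A,\alpha')>0$ (one exists since $(A,A)>0$); the $\alpha'$-string through $A$ then contains $A-\alpha'$, so $B:=A-\alpha'$ is a relative root, of level $\lev(A)-1\ge 1$, hence positive. It cannot be collinear with $\alpha'$: otherwise $A$ would be, forcing $A=\alpha'$ or $A=2\alpha'$, both excluded. So $B$ and $C:=\alpha'$ finish this case by the observation above.

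The remaining cases, $A$ simple (so $\lev(A)=1$) and $A=2\gamma$ with $\gamma$ a simple relative root, I would handle together by finding a simple relative root $\alpha'$ in the irreducible component $\Delta$ of $A$, not proportional to $A$, with $A+\alpha'\in\Phi_{J,\Gamma}$, and then setting $B:=A+\alpha'$, $C:=-\alpha'$. When $A$ is simple, $\Delta$ is connected with at least two nodes (rank $\ge 2$), so $A$ has a neighbour $\alpha'\ne A$ in $\Delta$ and $A+\alpha'$ is a relative root. When $A=2\gamma$, the relative roots $\gamma$ and $2\gamma$ force $\Delta$ to be non-reduced, hence of type $BC_n$ with $n\ge 2$; identifying $\Delta$ with the standard $BC_n$ so that the simple relative roots in $\Delta$ become the standard ones, $\gamma=e_n$ and $\alpha':=e_{n-1}-e_n$ gives $A+\alpha'=e_{n-1}+e_n$. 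Here $\lev(C)=-1$, so one argues directly: $iB+jC=iA+(i-j)\alpha'$, and since $\alpha'$ is not in the support of $A$ (which is $\{A\}$ or $\{\gamma\}$), this lies in $\Phi_{J,\Gamma}$ only when $i\ge j$; for $i>j$ it is a positive combination of level $i\lev(A)+(i-j)>\lev(A)$, and for $i=j$ it equals $iA$, a relative root only for $i=1$ (excluded) or, if $2A$ is a relative root (which happens only when $A$ is itself a short simple root), for $i=2$, giving $2A$ of level $2\lev(A)>\lev(A)$. In all cases the sign and level conditions hold.

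I expect the main obstacle to be exactly this last case: one has to recognize that a positive relative root of level $\ge 2$ which cannot be split into two non-collinear positive relative roots is necessarily twice the short simple root of a $BC_n$-component, which brings in the classification of non-reduced relative root systems, and one then has to work with a negative $C$ since $\lev(C)\ge 1$ is no longer available. Everything else is bookkeeping with levels.
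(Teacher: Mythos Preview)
Your argument rests on the premise that $\Phi_{J,\Gamma}$ is always a (possibly non-reduced) root system, so that one may speak of $\alpha'$-strings, Dynkin-diagram neighbours of a simple relative root, and a classification of the non-reduced irreducible components as $BC_n$. This is true when $(J,\Gamma)$ arises as the Tits index of a group over a local ring (so that $P$ is a minimal parabolic and $\Phi_P$ coincides with the relative root system of~\cite[Exp.~XXVI~\S7]{SGA}), but the lemma is stated for an \emph{arbitrary} $\Gamma$-invariant subset $J\subseteq\Pi$, and in that generality the premise fails. Take $\Phi=B_3$ with $\alpha_3$ short, $\Gamma=1$, and $J=\{\alpha_1,\alpha_3\}$; writing relative roots as $(a,c)=a\,\pi(\alpha_1)+c\,\pi(\alpha_3)$ one finds
\[
\Phi_{J,\Gamma}^+=\{(1,0),\ (0,1),\ (1,1),\ (0,2),\ (1,2)\},
\]
i.e.\ $BC_2^+$ with the long root $(2,2)$ removed. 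No inner product makes this a root system: if $(1,0)\perp(0,1)$ then $s_{(1,0)}\bigl((1,1)\bigr)=(-1,1)\notin\Phi_{J,\Gamma}$, while if they are not orthogonal then $s_{(1,0)}\bigl((0,1)\bigr)=(1,1)$ forces $s_{(1,0)}\bigl((0,2)\bigr)=(2,2)\notin\Phi_{J,\Gamma}$. Hence the string argument producing $A-\alpha'$ from $(A,\alpha')>0$, the existence of a ``neighbour'' of a simple relative root, and the $BC_n$ classification you invoke in the last case are all unjustified. Even the pairing $(A,\alpha')$ has no canonical meaning on the quotient lattice.

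The paper's proof sidesteps this entirely by never leaving the honest root system $\Phi$: one lifts $A$ to some $\alpha\in\pi^{-1}(A)\subseteq\Phi$, manipulates $\alpha$ inside $\Phi$ (either writing $\alpha$ as a chain of simple roots and peeling off the last one lying in $J$, or, when $A=k\pi(\alpha_r)$, adding an explicit root $\beta\in\Phi$ that projects to a different simple relative root), and only then pushes the decomposition down through $\pi$. Your overall shape---split off a simple relative root when possible, otherwise add one and subtract it---is the same as the paper's, but each existence step must be argued upstairs in $\Phi$ rather than by root-system axioms in $\Phi_{J,\Gamma}$.
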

\begin{proof}
We can assume that the root system $\Phi$ is irreducible, and that $A$ is a positive relative root,
that is, $\pi^{-1}(A)\subseteq\Phi^+$.

Assume first that $A=k\pi(\alpha_r)$, where $\alpha_r\in\Pi$ is a simple root and $k>0$ is a positive
integer. Let $\alpha_s\in J$ be a simple root such that the $\Gamma$-orbits of $\alpha_s$ and $\alpha_r$
are distinct, and $\alpha_s$ is at the least possible distance from $\alpha_r$ on the Dynkin diagram.
It is easy to see that for any $\alpha\in\pi^{-1}(A)$ there exists $\beta\in\pi^{-1}(\alpha_s)$
such that $(\alpha,\beta)<0$, and, consequently, $\alpha+\beta\in\Phi$. Indeed, we have $m_s(\alpha)=0$
by definition, thus we can take for $\beta$ the sum of all simple roots
in
the Dynkin diagram chain between $\alpha_s$ and the nearest simple root appearing in the decomposition of $\alpha$.
Now set $B=\pi(\alpha+\beta)$ and $C=\pi(-\beta)$. It is clear that any root in
$\pi^{-1}(iB+jC)$, $i,j>0$, contains  the summand $i\alpha_r$ in its decomposition, and thus is
a positive root. Then $iB+jC$ is a positive relative root for any $i,j>0$. Moreover, one sees
that $\lev(iB+jC)=\lev(A)$ if and only if $i=j=1$. Since $\pi(\alpha)=k\pi(\alpha_r)$, and $\pi(-\beta)=-\pi(\alpha_s)$,
the roots $B$ and $C$ are non-collinear.

Consider the case where $A\neq k\pi(\alpha_r)$ for any $\alpha_r\in J$. For any $\alpha\in\pi^{-1}(A)$
there exists a sequence of simple roots $\beta_1,\ldots,\beta_n\in\Pi$ such that
$\alpha=\beta_1+\ldots+\beta_n$ and $\beta_1+\ldots+\beta_i\in\Phi$ for any $1\le i\le n$.
Let $i$ be the least possible index such that $\beta_{i+1},\ldots,\beta_n\in\Delta$. Then $\beta_i\in J$
and $\pi(\beta_1+\ldots+\beta_{i-1}+\beta_i)=A$. Set $B=\pi(\beta_1+\ldots+\beta_{i-1})$ and
$C=\pi(\beta_i)$. Since $B$ and $C$ are positive relative roots, we have $\lev(iB+jC)>\lev(A)$
for any $i,j>0$ distinct from $i=j=1$. The relative roots $B$ and $C$ are non-collinear since
otherwise we would have had $A=k\pi(\beta_i)$ for some $k>0$.
\end{proof}

\section{Isotropic reductive groups and relative root subschemes}
In this section we recall some basic notions pertaining to reductive groups over rings; see~\cite{SGA,PS,S-thes}
for more detailed exposition.

Let $R$ be a commutative ring with 1, and let $G$ be a reductive group scheme, or reductive group for short,
over $R$ (see~\cite{SGA} for the definition). We denote by
$G^{\ad}$ and $G^{\sc}$ the corresponding adjoint and simply connected semisimple groups, respectively.

Any reductive algebraic group $G$ over $R$ is split locally in the fpqc topology on $\Spec R$.
If $G$ is of constant type over $R$ (that is, the root system of $G$ is the same at any point of $\Spec R$),
then $G$ is a twisted form of a split reductive algebraic group $G_0$ over $R$,
given by a cocycle $\xi\in H^1_{fpqc}(R,\Aut(G_0))$. Recall that the connected component of $\Aut(G_0)$
is precisely $G_0^{\ad}$. The group $G$ is {\it of inner type}, if $\xi$ is in the image
of the natural map $H^1_{fpqc}(R,G_0^{\ad})\to H^1_{fpqc}(R,\Aut(G_0))$.
One can always find a finite Galois extension $S$ of $R$
such that $G_S$ is of inner type. The Galois group $\Gal(S/R)$ acts on the Dynkin digram of each
$G_{\overline{k(s)}}$, where $\overline{k(s)}$ is the algebraic closure of
the residue field of a point $s\in\Spec R$, via a {\it $*$-action} (see~\cite{PS-tind, S-thes}).

Recall that $G$ is called isotropic, if it contains a proper parabolic subgroup $P$ over $R$.
Recall that we call a parabolic subgroup $P$ of $G$ {\it strictly proper}, if
it does not contain any semisimple normal subgroup of $G$.
We set
$$
E_P(R)=\l<U_P(R),U_{P^-}(R)\r>,
$$
where $P^-$ is any parabolic subgroup of $G$ opposite to $P$, and $U_P$ and $U_{P^-}$ are the unipotent
radicals of $P$ and $P^-$ respectively.
The main theorem of~\cite{PS} states that $E_P(R)$
does not depend on the choice of a strictly proper parabolic subgroup $P$, as soon as
for any maximal ideal $M$ in $R$ all irreducible components of the relative
root system of $G_{R_M}$ are of rank $\ge 2$. Under this assumption, we call $E_P(R)$ the elementary
subgroup of $G(R)$ and denote it simply by $E(R)$.

Let $P=P^+$ be a parabolic subgroup of $G$, and $P^-$ be an opposite parabolic subgroup. Let $L=P^+\cap P^-$
be their common Levi subgroup.
It was shown in~\cite{PS} that we can represent $\Spec(R)$ as a finite disjoint union
$$
\Spec(R)=\coprod\limits_{i=1}^m\Spec(R_i),
$$ so that
the following conditions hold for $i=1,\ldots, m$:\\
\indent$\bullet$ for any $s\in\Spec R_i$ the root system of $G_{\overline{k(s)}}$ is the same;\\
\indent$\bullet$ for any $s\in\Spec R_i$ the type of the parabolic subgroup $P_{\overline{k(s)}}$ of
$G_{\overline{k(s)}}$ is the same;\\
\indent$\bullet$ if $S_i/R_i$ is a Galois extension of rings such that $G_{S_i}$ is of inner type,
then for any $s\in\Spec R_i$ the Galois group $\Gal(S_i/R_i)$ acts on the Dynkin diagram $D_i$ of
$G_{\overline{k(s)}}$ via the same subgroup of $\Aut(D_i)$.

From here until the end of this section, assume that $R=R_i$ for some $i$ (or just extend the base).
Denote by $\Phi$ the root system of $G$, by $\Pi$ a set of simple roots of $\Phi$, by $D$
the corresponding Dynkin diagram. Then the $*$-action on $D$ is determined by a subgroup $\Gamma$
of $\Aut D$. Let $J$ be the subset of $\Pi$ such that $\Pi\setminus J$ is the type
of $P_{\overline{k(s)}}$ (that is, the set of simple roots of the Levi sugroup $L_{\overline{k(s)}}$).
Then $J$ is $\Gamma$-invariant.
The system of relative roots $\Phi_{J,\Gamma}$ is called {\it the system of relative roots corresponding to $P$}
and denoted also by $\Phi_P$. If $R$ is a local ring and $P$ is a minimal parabolic subgroup of $G$,
then $\Phi$ can be identified with the relative root system of $G$ in the sense of~\cite[Exp. XXVI \S 7]{SGA}
(see also~\cite{BoTi} for the field case), as was shown in~\cite{PS,S-thes}.

To any relative root $A\in\Phi_P$
one associates a finitely generated projective $R$-module $V_A$ and a closed embedding
$$
X_A:W(V_A)\to G,
$$
where $W(V_A)$ is the affine group scheme over $R$ defined by $V_A$,
which is called a {\it relative root subscheme} of $G$.
These subschemes possess several nice properties
similar to that of elementary root subgroups of a split group, see~\cite[Th. 2]{PS}. In particular,
they  are subject to
certain commutator relations which generalize the Chevalley commutator formula.

More precisely, assume that $A,B\in\Phi_P$ satisfy $mA\neq -kB$ for any $m,k\ge 1$.
Then there exists a polynomial map
$$
N_{ABij}\colon V_A\times V_B\to V_{iA+jB},
$$
homogeneous of degree $i$ in the first variable and of degree $j$ in the second
variable, such that for any $R$-algebra $R'$ and for any
for any $u\in V_A\otimes_R R'$, $v\in V_B\otimes_R R'$ one has
\begin{equation}\label{eq:Chev}
[X_A(u),X_B(v)]=\prod_{i,j>0}X_{iA+jB}(N_{ABij}(u,v))
\end{equation}
(see~\cite[Lemma 9]{PS}).

In a strict analogy with the split case, for any $R$-algebra $R'$ we have
$$
E(R')=\l<X_A(V_A\otimes_R R'),\ A\in\Phi_{P}\r>
$$
(see~\cite[Lemma 6]{PS}).

We will aslo use the following statement which is a slight extension of~\cite[Lemma 10]{PS}.

\begin{lem}\label{lem:comm}
Consider $A,B\in\Phi_P$ satisfying $A+B\in\Phi_P$ and $mA\neq -kB$ for any $m,k\ge 1$.
Denote by $\Phi_0$ an irreducible component of $\Phi$ such that $A,B\in\pi(\Phi_0)$.

{\rm (1)} In each of the following cases

\quad {\rm (a)} structure constants of $\Phi_0$ are invertible in $R$ (for example,
if $\Phi_0$ is simply laced);

\quad {\rm (b)} $A\neq B$ and $A-B\not\in\Phi_P$;

\quad {\rm (c)} $\Phi_0$ is of type $B_l$, $C_l$, or $F_4$,
and $\pi^{-1}(A+B)$ consists of short roots;

\quad {\rm (d)} $\Phi_0$ is of type $B_l$, $C_l$, or $F_4$, and there exist long roots $\alpha\in\pi^{-1}(A)$, $\beta\in\pi^{-1}(B)$
such that $\alpha+\beta$ is a root;

the map $\ N_{AB11}: V_A\times V_B\to V_{A+B}\ $
is surjective.

{\rm (2)} If $A-B\in\Phi_P$ and $\Phi_0\neq G_2$, then
$$
\im N_{AB11}+\im N_{A-B,2B,1,1}+\sum_{v\in V_B}\im (N_{A-B,B,1,2}(-,v))=V_{A+B},
$$
where $\im N_{A-B,2B,1,1}=0$ if $2B\not\in\Phi_P$.
\end{lem}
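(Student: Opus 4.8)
The plan is to reduce everything to the split case over a suitable base and then argue pointwise. First I would recall that, by the reductions made in Section 3 (passing to one of the pieces $R_i$ and extending the base if necessary), the relative root subschemes $X_A$ and the polynomial maps $N_{ABij}$ are obtained from the split situation by the Galois descent construction of~\cite{PS}; concretely, over a splitting ring $S$ the module $V_A\otimes_R S$ decomposes as a direct sum $\bigoplus_{\alpha\in\pi^{-1}(A)} S$ of the one-dimensional root spaces, and $N_{AB11}$ is, up to a change of coordinates, assembled from the ordinary structure constants $N_{\alpha\beta}$ appearing in the Chevalley commutator formula for the split group with root system $\Phi_0$. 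Surjectivity of an $S$-linear (or polynomial) map can be checked after base change to $S$ and then fibrewise at each point of $\Spec S$, so it suffices to prove the two assertions for the split Chevalley group over a field (indeed over $\ZZ$, tracking which structure constants get inverted).

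For part (1), the point is to exhibit, for each root $\gamma\in\pi^{-1}(A+B)$, a pair $\alpha\in\pi^{-1}(A)$, $\beta\in\pi^{-1}(B)$ with $\alpha+\beta=\gamma$ and with the relevant structure constant $N_{\alpha\beta}$ invertible in $R$; surjectivity of $N_{AB11}$ then follows because its "matrix" over the splitting ring, in the root-space coordinates, is, after the descent twist, essentially block-triangular with these invertible entries producing a surjection onto each summand $V_\gamma$. Case (a) is immediate since all $N_{\alpha\beta}=\pm1$. In case (b), the hypothesis $A-B\notin\Phi_P$ forces, for every $\alpha\in\pi^{-1}(A)$, $\beta\in\pi^{-1}(B)$ with $\alpha+\beta\in\Phi$, that $\alpha-\beta\notin\Phi$, so the $\alpha$-string through $\beta$ has length one and $N_{\alpha\beta}=\pm1$ by the standard formula $N_{\alpha\beta}=\pm(p+1)$ with $p=0$. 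Cases (c) and (d) are the doubly-laced bookkeeping: in a root system of type $B_l$, $C_l$, or $F_4$ the only structure constants that can fail to be units are the $\pm2$'s, and these occur precisely when one adds two short roots to get a long root; so if $\pi^{-1}(A+B)$ is all short (case (c)), or if for the relevant $\gamma$ one can choose $\alpha,\beta$ long (case (d)), the constant is $\pm1$. I would treat $\Phi_0=G_2$ inside (1) only insofar as it is forced by (a)–(d) — in $G_2$ the simply-laced cases still give units — and note that the troublesome $\pm3$ of $G_2$ is exactly why $G_2$ is excluded from part (2).

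For part (2), when $A-B\in\Phi_P$, a single product $N_{AB11}(u,v)$ need not hit all of $V_{A+B}$ because the "bad" structure constants may intervene; the remedy is the Chevalley commutator identity itself. Following the proof of~\cite[Lemma 10]{PS}, I would expand $[X_{A-B}(w),[X_B(v_1),X_B(v_2)]]$ and also $[X_{A-B}(w),X_{2B}(v)]$ (when $2B\in\Phi_P$) via formula~\eqref{eq:Chev}, collect the term landing in $V_{A+B}$, and compare with the direct product $[X_{A-B+B}(\,\cdot\,),X_B(\,\cdot\,)]=[X_A(\,\cdot\,),X_B(\,\cdot\,)]$; the Jacobi-type rearrangement shows that $V_{A+B}$ is spanned by $\im N_{AB11}$ together with $\im N_{A-B,2B,1,1}$ and the images of the maps $v_2\mapsto N_{A-B,B,1,2}(w,v_2)$, which is exactly the claimed identity. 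The one case requiring care is that $N_{A-B,B,1,2}$ genuinely appears (degree $2$ in the $B$-variable), which is why one cannot simply cite surjectivity of some $N_{\cdot\cdot11}$; excluding $G_2$ guarantees that no degree-$3$ term $N_{A-B,B,1,3}$ or coefficient of size $\pm3$ enters, so the listed three contributions already exhaust $V_{A+B}$.

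The main obstacle I anticipate is purely combinatorial rather than conceptual: verifying in cases (c)–(d) of part (1), and throughout part (2), that for every root $\gamma$ in the relevant preimage one can actually choose the companion roots $\alpha,\beta$ with the prescribed length pattern (so that only unit structure constants are used), which amounts to a careful root-string analysis in $B_l$, $C_l$, $F_4$ together with the description of $\pi^{-1}$ coming from the $*$-action — precisely the kind of case check that~\cite[Lemma 10]{PS} already carries out and that I would import with the minor extension needed to cover the extra hypotheses (b)–(d) stated here.
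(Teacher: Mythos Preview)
Your overall strategy coincides with the paper's: pass to a splitting cover, identify $N_{AB11}$ with the Chevalley structure constants on root-space coordinates, and show each $e_\gamma$ for $\gamma\in\pi^{-1}(A+B)$ is hit up to a unit; then descend. Your treatment of cases (a), (b), (c) is correct and in fact more explicit than the paper's (which simply asserts that $c=\pm1$ in these cases without spelling out the $p=0$ argument you give for (b)).

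The one real gap is in case (d). The hypothesis only furnishes \emph{one} long $\gamma_0=\alpha_0+\beta_0$ with $\alpha_0,\beta_0$ long; to get surjectivity you must show \emph{every} long $\gamma\in\pi^{-1}(A+B)$ admits such a decomposition. You flag this as the ``main obstacle'' and propose a root-string case analysis imported from \cite[Lemma~10]{PS}, but that lemma does not actually contain this check. The paper settles it uniformly by invoking \cite{ABS}: the Weyl group $W_L$ of the Levi subgroup acts transitively on the roots of a given shape (in particular, on the long roots inside a fixed fibre $\pi^{-1}(A+B)$), and since $W_L$ fixes $\pi$ and preserves lengths, applying $w\in W_L$ with $w\gamma_0=\gamma$ transports the long-plus-long decomposition to $\gamma$. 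Without this (or an equivalent argument) case (d) is incomplete; your proposed case-by-case route would presumably work but is the less efficient path.

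For part (2) the paper simply cites \cite[Lemma~10]{PS} verbatim; your Jacobi-style expansion is essentially a sketch of what that lemma proves, so there is no divergence.

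Two small remarks on phrasing: surjectivity of $N_{AB11}$ here means surjectivity onto $V_{A+B}$ of the $R$-submodule generated by its image (this is what the paper uses when it says ``$\im N_{AB11}$ is a submodule defined over the base''), so the ``block-triangular matrix'' picture is a bit misleading since the map is bilinear; and there is no need to go fibrewise over $\Spec S$ --- once the image equals $V_{A+B}\otimes S_\tau$ over each member of the fpqc cover, faithfully flat descent for submodules gives the result over $R$.
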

\begin{proof}
(1) By~\cite[Lemma 4]{PS} any $\gamma\in \pi^{-1}(A+B)$
decomposes as $\gamma=\alpha+\beta$, $\alpha\in \pi^{-1}(A)$, $\beta\in \pi^{-1}(B)$.
Let $S_{\tau}$ be any member of an affine fpqc-covering $\coprod \Spec S_\tau\to \Spec R$ that splits $G$.
Set
$$
\Psi=\{iA+jB\ |\ i,j>0,\ (i,j)\neq (1,1),\ iA+jB\in\Phi_P\}.
$$
Then in the notation of~\cite[Th. 2]{PS}, over $S_\tau$ the commutator $[X_A(e_{\alpha}),X_B(e_{\beta})]$, computed modulo the subgroup
$\l<X_C(V_C),\ C\in\Psi\r>$,
is of the form $x_{\gamma}(\pm c)=X_{A+B}(\pm c e_{\gamma})$, where $c=\pm 1,\pm 2,\pm 3$ is the corresponding structure
constant. If (a) holds, then $c$ is invertible. If
(b), (c) or (d) holds, then $c$ necessarily equals $\pm 1$. Indeed, in the
only dubious case (d) one should
note that, due to the transitive action of the Weyl group of the Levi subgroup on the roots of the same shape
(see~\cite{ABS}),
{\it any} long root $\gamma\in\pi^{-1}(A+B)$
decomposes as a sum of long roots.
Hence $c$ is always invertible.
This implies that $\im (N_{AB11})_{\tau}=V_{A+B}\otimes S_{\tau}$. Since $\im N_{AB11}$ is a submodule of
$V_{A+B}$ defined over the base ring, we have $\im N_{AB11}=V_{A+B}$.

(2) See~\cite[Lemma 10]{PS}.
\end{proof}

\begin{lem}\label{lem:comm2}
Suppose that $\Phi_0$ is an irreducible component of $\Phi$ such that $\Phi_0\cong C_l$,
$l>2$, $P$ is a parabolic subgroup of type $\Pi\setminus J$, where $J=\{\alpha_i,\alpha_l\}$,
$2i=l$ ($\alpha_i$ is short, $\alpha_l$ is long), so that $\Phi_{0,P}\cong C_2$.
Denote $\pi(\alpha_i)=A_1$ and $\pi(\alpha_l)=A_2$.
Then
$$
\im (0,N_{A_1,A_1+A_2,1,1})+\sum_{v\in V_A}\im f_v=V_{A_1+A_2}\oplus V_{2A_1+A_2},
$$
where $f_v=(N_{A_1,A_2,1,1}(v,-),N_{A_1,A_2,2,1}(v,-))\colon V_{A_2}\to V_{A_1+A_2}\oplus V_{2A_1+A_2}$.
\end{lem}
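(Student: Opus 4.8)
The plan is to reduce the identity to an explicit computation inside the split Chevalley group of type $C_l$ and then descend. Each term on the left-hand side is an $R$-submodule of the finitely generated projective module $V_{A_1+A_2}\oplus V_{2A_1+A_2}$: the modules $\im N_{A_1,A_1+A_2,1,1}$ and $\im N_{A_1,A_2,1,1}$ are images of the linear maps induced by bilinear maps of modules, and for each fixed $v$ the map $f_v$ is linear, so $\sum_v\im f_v$ is a submodule. As these submodules depend functorially on the base ring, it suffices --- cf. the proof of Lemma~\ref{lem:comm}(1) and of~\cite[Lemma 10]{PS} --- to check the equality after a faithfully flat base change, so I may assume $G$ is split; and since $A_1,A_2,A_1+A_2,2A_1+A_2$ and the modules attached to them only involve the component $\Phi_0$, I may assume $\Phi=\Phi_0=C_l$. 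Then $V_A=\bigoplus_{\alpha\in\pi^{-1}(A)}Re_\alpha$, $X_A(e_\alpha)=x_\alpha(\pm1)$, and the maps $N_{ABij}$ are read off from the ordinary Chevalley commutator formula for $C_l$.

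Next I would record the relative root data. In the Bourbaki numbering the roots of $C_l$ are $\pm\varepsilon_a\pm\varepsilon_b$ ($a<b$) and $\pm2\varepsilon_a$, with $\alpha_k=\varepsilon_k-\varepsilon_{k+1}$ ($k<l$) and $\alpha_l=2\varepsilon_l$. Since $\Aut(D)$ is trivial and $J=\{\alpha_i,\alpha_l\}$ with $i=l/2$, the projection $\pi$ sends a positive root $\sum_k m_k\alpha_k$ to $m_iA_1+m_lA_2$; computing $m_i,m_l$ on each positive root gives
$$\pi^{-1}(A_1)=\{\varepsilon_a-\varepsilon_b:a\le i<b\},\qquad \pi^{-1}(A_1+A_2)=\{\varepsilon_a+\varepsilon_b:a\le i<b\},$$
$$\pi^{-1}(A_2)=\{2\varepsilon_a:i<a\}\cup\{\varepsilon_a+\varepsilon_b:i<a<b\},\qquad \pi^{-1}(2A_1+A_2)=\{2\varepsilon_a:a\le i\}\cup\{\varepsilon_a+\varepsilon_b:a<b\le i\}.$$
In particular $\pi^{-1}(A_1)$ and $\pi^{-1}(A_1+A_2)$ consist only of short roots, whereas $\pi^{-1}(A_2)$ and $\pi^{-1}(2A_1+A_2)$ each contain a family of long roots $2\varepsilon_a$ and a family of short roots $\varepsilon_a+\varepsilon_b$.

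Now I would treat the two summands $V_{A_1+A_2}$ and $V_{2A_1+A_2}$ of the target separately. Since $\pi^{-1}(A_1+A_2)$ is short, Lemma~\ref{lem:comm}(1)(c) applies to the pair $A_1,A_2$ and gives $\im N_{A_1,A_2,1,1}=V_{A_1+A_2}$; as the first component of $f_v$ is $N_{A_1,A_2,1,1}(v,-)$, the projection of $\sum_v\im f_v$ onto $V_{A_1+A_2}$ is already everything, and it remains to produce $0\oplus V_{2A_1+A_2}$. For the short part of $V_{2A_1+A_2}$: for $a<c\le i$ and any $b>i$ the identity $(\varepsilon_a-\varepsilon_b)+(\varepsilon_c+\varepsilon_b)=\varepsilon_a+\varepsilon_c$ exhibits a short root as a sum of two short roots, so the structure constant is $\pm1$, $[x_{\varepsilon_a-\varepsilon_b}(1),x_{\varepsilon_c+\varepsilon_b}(1)]=x_{\varepsilon_a+\varepsilon_c}(\pm1)$, and $e_{\varepsilon_a+\varepsilon_c}\in\im N_{A_1,A_1+A_2,1,1}$; letting $a,c$ vary covers all short roots of $\pi^{-1}(2A_1+A_2)$. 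For the long part: fix $a\le i<b$; then $\varepsilon_a-\varepsilon_b$ (short) and $2\varepsilon_b$ (long) span a subsystem of type $C_2$ in which $\varepsilon_a+\varepsilon_b=(\varepsilon_a-\varepsilon_b)+2\varepsilon_b$ is short and $2\varepsilon_a=2(\varepsilon_a-\varepsilon_b)+2\varepsilon_b$ is long, and the Chevalley formula there reads $[x_{\varepsilon_a-\varepsilon_b}(1),x_{2\varepsilon_b}(1)]=x_{\varepsilon_a+\varepsilon_b}(\pm1)\,x_{2\varepsilon_a}(\pm1)$ with both constants $\pm1$; hence $f_{e_{\varepsilon_a-\varepsilon_b}}(e_{2\varepsilon_b})=(\pm e_{\varepsilon_a+\varepsilon_b},\pm e_{2\varepsilon_a})$. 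Since $l>2$ forces $i\ge2$, there is a $b'>i$ with $b'\ne b$; then $2(\varepsilon_a-\varepsilon_{b'})+(\varepsilon_b+\varepsilon_{b'})$ is not a root, so $f_{e_{\varepsilon_a-\varepsilon_{b'}}}(e_{\varepsilon_b+\varepsilon_{b'}})=(\pm e_{\varepsilon_a+\varepsilon_b},0)$, and subtracting a unit multiple of this from $f_{e_{\varepsilon_a-\varepsilon_b}}(e_{2\varepsilon_b})$ leaves $(0,\pm e_{2\varepsilon_a})$ in $\sum_v\im f_v$, for each $a\le i$. Together the two parts give $0\oplus V_{2A_1+A_2}$, so the left-hand side is everything over the split group, hence over $R$. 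The bookkeeping of which roots of $C_l$ add up and the check that no extra commutator terms occur are routine.

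The genuinely delicate point is the descent in the first step. Over a non-split base the root vectors $e_\alpha$ used to produce the long part of $V_{2A_1+A_2}$ are not available, and the identity $N_{A_1,A_1+A_2,1,1}(v_1,N_{A_1,A_2,1,1}(v_2,v'))=N_{A_1,A_2,2,1}(v_1+v_2,v')-N_{A_1,A_2,2,1}(v_1,v')-N_{A_1,A_2,2,1}(v_2,v')$ shows that the quadratic map $N_{A_1,A_2,2,1}$ cannot be replaced by a multilinear one so as to make the descent formal, the way it is for $\im N_{AB11}$ in Lemma~\ref{lem:comm}(1). One must argue instead, along the lines of~\cite[Lemma 10]{PS}, that the subfunctor $R'\mapsto$ (left-hand side over $R'$) of $W(V_{A_1+A_2}\oplus V_{2A_1+A_2})$ is well-behaved enough for its coincidence with the whole vector group scheme to be checked fpqc-locally.
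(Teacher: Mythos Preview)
Your argument is essentially the paper's: pass to a splitting cover, hit the short basis vectors of $V_{2A_1+A_2}$ via $N_{A_1,A_1+A_2,1,1}$ applied to pairs of short roots, hit each long one $e_{2\varepsilon_a}$ via $f_{e_{\varepsilon_a-\varepsilon_b}}(e_{2\varepsilon_b})$, and cancel the unwanted $V_{A_1+A_2}$-component using a short $\beta\in\pi^{-1}(A_2)$ (whose existence is precisely where $l>2$, hence $l-i=i\ge 2$, enters). On your closing worry about descent: the paper dispatches it in a single clause (``noting that the modules in question are defined over the base ring''), exactly as in Lemma~\ref{lem:comm}(1) and in~\cite[Lemma~10]{PS}, so your instinct to justify it along those lines is the right one and no further idea is needed.
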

\begin{proof}
Let $S_{\tau}$ be any member of an affine fpqc-covering $\coprod \Spec S_\tau\to \Spec R$ that splits $G$.
Suppose that $\gamma\in\pi^{-1}(2A_1+A_2)$ is a short root. We can find short roots
$\alpha\in\pi^{-1}(A_1)$, $\beta\in\pi^{-1}(A_1+A_2)$ such that $\gamma=\alpha+\beta$.
Therefore,
$$[X_{A_1}(e_\alpha),X_{A_1+A_2}(e_\beta)]=
x_\gamma(\pm 1)=X_{2A_1+A_2}(\pm e_\gamma).
$$
Hence, $e_\gamma\in\im (N_{A_1,A_1+A_2,1,1})_\tau$.
Now let $\gamma\in\pi^{-1}(2A_1+A_2)$ be a long root. Take
$\alpha\in\pi^{-1}(A_1)$, $\beta\in\pi^{-1}(A_2)$ such that $\beta\neq\alpha_l$ and
$\gamma=2\alpha+\beta$ (note that $\alpha$ is short, $\beta$ is long). 
Therefore
$$
[X_{A_1}(e_\alpha),X_{A_2}(e_\beta)]=
x_{\alpha+\beta}(\pm 1)x_{2\alpha+\beta}(\pm 1)=X_{A_1+A_2}(\pm e_{\alpha+\beta})X_{2A_1+A_2}(\pm e_\gamma).
$$
Finally, any $\gamma\in\pi^{-1}(A_1+A_2)$ is a short root, so there exist short roots
$\alpha\in\pi^{-1}(A_1)$, $\beta\in\pi^{-1}(A_2)$ such that
$\alpha+\beta=\gamma$, hence $[X_{A_1}(e_\alpha),X_{A_2}(e_\beta)]=X_{A_1+A_2}(\pm e_\gamma)$.
Combining these results and noting that the modules in question are defined over the base ring,
we are done.
\end{proof}

\section{Proof of Theorem~\ref{th:main}}

Let $R$ be a commutative ring with 1, $G$ be a reductive group over $R$, $P$ be a strictly proper
parabolic subgroup of $G$.
For any ideal $I\subseteq R$ we write
$$
E_P(I)=\l<U_P(I),U_{P^-}(I)\r>\le G(R).
$$
We denote by $R[Y,Z]$ a ring of polynomials in two variables $Z$ and $Y$ over $R$.

\begin{lem}\label{lem:incl}
Let $G$ be a reductive group scheme over a commutative ring $R$, and let $P$ and $P'$ be two strictly
proper parabolic subgroups of $G$ such that $P\le P'$ or $P'\le P$. Then
for any integer $m>0$ there exists an integer $k>0$ such that
$$
E_P(Z^kR[Z])\le E_{P'}(Z^mR[Z]).
$$
\end{lem}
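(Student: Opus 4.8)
The statement compares elementary subgroups attached to two nested strictly proper parabolics $P\le P'$ (or $P'\le P$), working over the polynomial ring $R[Z]$, and asks for a ``rate of contraction'': any element built from points of the unipotent radicals of $P,P^-$ with entries in $Z^kR[Z]$ already lies in $E_{P'}(Z^mR[Z])$, provided $k$ is large enough. The natural approach is to reduce everything to the relative root subschemes $X_A$ and to track the $Z$-adic order of the arguments through the generalized Chevalley commutator formula~\eqref{eq:Chev}.

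First I would reduce to the ``standard position'' of Section~3: after the finite decomposition $\Spec R=\coprod\Spec R_i$, we may assume $G$ has constant root system $\Phi$, constant $\Gamma$, and $P,P'$ correspond to $\Gamma$-invariant subsets $J\supseteq J'$ of $\Pi$ (or $J\subseteq J'$), so that $\Phi_{P'}$ is a ``sub''-relative-root-system of $\Phi_P$ in the sense that the projection $\pi_{J,\Gamma}$ factors through $\pi_{J',\Gamma}$, or vice versa. Then $E_P(R[Z])=\langle X_A(V_A\otimes R[Z]),\ A\in\Phi_P\rangle$ and similarly for $P'$, by~\cite[Lemma 6]{PS}. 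The key elementary observation is that each relative root subgroup $U_P$ is a product of $X_A$'s over the $A$ of a fixed sign and level, so $U_P(Z^kR[Z])$ is generated by elements $X_A(u)$ with $u\in Z^kV_A\otimes R[Z]$; conversely it suffices to show every such generator lies in $E_{P'}(Z^mR[Z])$.

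The heart of the argument is an induction on level. Using Lemma~\ref{lem:decroots}: since all irreducible components of $\Phi_P$ have rank $\ge 2$ (this is the hypothesis under which $E_P$ is defined at all), every $A\in\Phi_P$ can be written $A=B+C$ with $B,C$ non-collinear of the same sign and strictly smaller $|\lev|$, so that the commutator formula~\eqref{eq:Chev} gives $[X_B(v),X_C(w)]=X_A(N_{BC11}(v,w))\cdot(\text{higher terms})$, where the higher terms $X_{iB+jC}$ all have strictly larger $|\lev|$ and the same sign. By Lemma~\ref{lem:comm}(1) (or Lemma~\ref{lem:comm}(2), or Lemma~\ref{lem:comm2} in the bad multiply-laced configurations) the map $N_{BC11}$, together with finitely many auxiliary maps of the same homogeneity type, is surjective onto $V_A$; hence any $u\in Z^{2k}V_A\otimes R[Z]$ is a finite sum of values $N_{BC11}(v,w)$ (and similar) with $v,w$ already in $Z^kV_B\otimes R[Z]$, $Z^kV_C\otimes R[Z]$. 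So $X_A(u)$, modulo higher-level generators with $Z$-order still $\ge Z^{2k}$, is a product of commutators of lower-level generators. Running this recursively down to the simple relative roots $\pi(\Pi)$ (which lie in $\Phi_{P'}$ when $J'\subseteq J$ — here is exactly where the nestedness $P\le P'$ enters, since then $\Phi_{P'}^{+}$ contains the images of all simple relative roots of $\Phi_P$), and choosing $k$ of the form $m\cdot 2^{h}$ with $h$ the maximal level occurring in $\Phi_P$, one expresses each $X_A(u)$ with $u\in Z^kV_A\otimes R[Z]$ as a word in $X_{A'}(u')$ with $A'\in\Phi_{P'}$ and $u'\in Z^mV_{A'}\otimes R[Z]$, i.e.\ inside $E_{P'}(Z^mR[Z])$. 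The case $P'\le P$ is symmetric but easier: then $\Phi_P$ is obtained from $\Phi_{P'}$ by a further coarsening, each $X_A$ for $A\in\Phi_P$ is literally a product of $X_{A'}$'s for $A'\in\Phi_{P'}$ lying over $A$ (same $Z$-order), so one may take $k=m$.

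**Main obstacle.** The delicate point is bookkeeping the ``higher-level tail'' in~\eqref{eq:Chev}: when I rewrite $X_A(u)$ as a commutator, I pick up a product over $i,j>0$, $(i,j)\ne(1,1)$ of factors $X_{iB+jC}(N_{BCij}(v,w))$; each such factor has $Z$-order $\ge ik+jk\ge 3k$ and level strictly larger than $\lev A$, so it must be absorbed into the induction at the \emph{next} stage rather than the current one. Making the recursion terminate therefore requires ordering the relative roots by level and processing from the bottom up while carefully propagating the quantitative $Z$-order estimate; the surjectivity inputs (Lemma~\ref{lem:comm}, Lemma~\ref{lem:comm2}) are needed precisely to guarantee that at each stage the required module elements are hit by maps whose homogeneity degrees are $(1,1)$ or of total degree small enough that the $Z$-order only grows by a bounded multiplicative factor. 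I expect the rest — reducing to standard position, handling the multiply-laced and $G_2$ exceptional configurations by invoking the appropriate case of Lemma~\ref{lem:comm}/Lemma~\ref{lem:comm2}, and the trivial direction $P'\le P$ — to be routine.
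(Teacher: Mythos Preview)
Your treatment of the case $P'\le P$ is correct and matches the paper: since $U_P\le U_{P'}$, the product decomposition of $U_{P^\pm}$ in terms of the finer relative roots $X_{A'}$, $A'\in\Phi_{P'}$, immediately gives $U_{P^\pm}(Z^mR[Z])\le U_{P'^\pm}(Z^mR[Z])$, so $k=m$ works.

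The case $P\le P'$, however, has a genuine gap. Two points fail simultaneously. First, Lemma~\ref{lem:decroots} does \emph{not} produce $B,C$ of the same sign as $A$ and of strictly smaller $|\lev|$: look at its proof when $A=k\pi(\alpha_r)$ is a (multiple of a) simple relative root --- there one takes $C=-\pi(\alpha_s)$, which is negative. So your downward recursion on level has no base case in the form you state it. Second, and more fundamentally, when $P\le P'$ one has $J'\subsetneq J$, and the simple relative roots $\pi_{J,\Gamma}(\alpha)$ of $\Phi_P$ with $\alpha\in J\setminus J'$ project to $0$ under $\Phi_P\to\Phi_{P'}\cup\{0\}$; the corresponding root subschemes $X_A$ sit in the Levi of $P'$, not in $U_{P'^\pm}$. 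So even if your recursion terminated at simple relative roots of $\Phi_P$, those are precisely the generators that are \emph{not} visibly in $E_{P'}(Z^mR[Z])$. Expressing such a Levi element $X_A(Z^kv)$ as a word in $X_{A'}(Z^{n_i}v_i)$ with $A'\in\Phi_{P'}$ is exactly the non-trivial content; the paper does not redo this but invokes \cite[Lemma~12]{PS} as a black box and then substitutes $Y\mapsto Z^m$.

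A secondary remark: you are importing the rank~$\ge 2$ hypothesis and the surjectivity Lemmas~\ref{lem:comm}, \ref{lem:comm2} into the argument, but Lemma~\ref{lem:incl} has no such hypotheses and the paper's proof uses none of them. What you have sketched is really the skeleton of Lemma~\ref{lem:local} (perfectness over a local ring), where the decomposition $A=B+C$ and the surjectivity of $N_{BC11}$ are used to land in $[E_P,E_P]$, not in $E_{P'}$.
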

\begin{proof}
Without loss of generality, we can assume that over $R$ we have two sets of relative root
subschemes $X_A(V_A)$, $A\in\Phi_P$, and $X_B(V_B)$, $B\in\Phi_{P'}$, corresponding to $P$ and $P'$
respectively.

Then, if $P\le P'$, by~\cite[Lemma~12]{PS} there exists an integer $k>0$
such that for any $A\in\Phi_P$ and any $v\in V_A$
one can find relative roots
$B_i\in\Phi_{P'}$, elements $v_i\in V_{B_i}$, and integers
$n_i>0$ {\rm ($1\le i\le m$)},
such that
$$
X_A(Y^k v)=\prod\limits_{i=1}^m X_{B_i}(Y^{n_i} v_{i}),
$$
and hence $X_A(Y^k v)\in E_{P'}(YR[Y])$. Substituting $R$ by $R[Z]$ and $Y$ by $Z^m$, we obtain
$$
E_P(Z^kR[Z])\le E_{P'}(Z^mR[Z]).
$$

If, conversely, $P'\le P$, we have $U_P\le U_{P'}$.
Let $\Psi^\pm\subseteq\Phi^+$ be two closed sets of roots corresponding to $U_{P^\pm}$.
Then we have $\pi(\Psi^\pm)\subseteq\Phi_{P'}^\pm$, where $\pi:\Phi\to\Phi_{P'}$ is the canonical projection.
By~\cite[Lemma 6]{PS} the map
$$
X_\Psi\colon\WW\Bigl(\bigoplus_{A\in\pi(\Psi^\pm)}V_A\Bigr)\to\UU_{P^\pm},\quad
(v_A)_A\mapsto\prod_A X_A(v_A),
$$
where the product is taken in any fixed order respecting the level of relative roots in $\Phi_{P'}$,
is an isomorphism of schemes over $R$. Therefore, $U_{P^\pm}(Z^mR[Z])\le U_{P'^\pm}(Z^mR[Z])$.
\end{proof}

\begin{lem}\label{lem:local}
In the setting of Theorem~\ref{th:main}, assume moreover that $R$ is a local ring.
Then for any integer $m>0$ there exists an
integer $k>0$ such that for any $R$-algebra $R'$ one has
$$
E_P(Z^kR'[Z])\subseteq [E_P(Z^mR'[Z]),E_P(Z^mR'[Z])].
$$
\end{lem}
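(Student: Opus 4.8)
Write $H=[E_P(Z^mR'[Z]),E_P(Z^mR'[Z])]$. Since $E_P(Z^nR'[Z])$ is generated by the elements $X_A(v)$ with $A\in\Phi_P$ and $v\in V_A\otimes_R Z^nR'[Z]$, it is enough to produce one integer $k>0$ --- depending on $m$ and on the root data of $G$ over $R$, but not on $R'$ --- such that every such generator with $v$ divisible by $Z^k$ already lies in $H$.

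First I would reduce to the case of a minimal parabolic. Choose a minimal parabolic $P_0\le P$; it is again strictly proper, and by the main theorem of~\cite{PS} one has $E_P=E_{P_0}$ pointwise. The integer furnished by Lemma~\ref{lem:incl} depends only on combinatorial data, so it is valid over every $R$-algebra; applying the lemma to the pair $P_0\le P$ in both directions shows that $\{E_P(Z^nR'[Z])\}_n$ and $\{E_{P_0}(Z^nR'[Z])\}_n$ are cofinal in one another, and it therefore suffices to treat $P=P_0$. From now on $\Phi_P$ is the genuine relative root system of $G$ over the local ring $R$, all of whose irreducible components have rank $\ge 2$, and the associated datum $(\Phi,\Gamma,J)$ is one of those classified over local rings in~\cite{PS-tind}. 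Since relative root subschemes attached to distinct irreducible components of $\Phi_P$ commute, we fix one component and work inside it.

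The heart of the argument is a downward induction on the level. Put $N=\max\{|\lev A|:A\in\Phi_P\}$; I claim there are integers $k_d\ge m$ ($1\le d\le N$), determined by $m$ and $\Phi_P$, such that $X_A(V_A\otimes Z^{k_{|\lev A|}}R'[Z])\subseteq H$ for every $A$; then $k=k_1$ works. Fix $A$ with $|\lev A|=d$. By Lemma~\ref{lem:decroots} write $A=B+C$ with $B,C$ non-collinear and every $iB+jC\in\Phi_P$ with $(i,j)\neq(1,1)$ of the same sign as $A$ and with $|\lev(iB+jC)|>d$. Put $k'=\max\bigl(m,\lceil\tfrac13\max_{e>d}k_e\rceil\bigr)$ (empty maximum being $0$). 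Applying the generalized Chevalley formula~\eqref{eq:Chev} to $u=Z^{k'}a$, $w=Z^{k'}b$ yields
\[
X_A\bigl(Z^{2k'}N_{BC11}(a,b)\bigr)=[X_B(Z^{k'}a),X_C(Z^{k'}b)]\cdot\prod_{(i,j)\neq(1,1)}X_{iB+jC}\bigl(Z^{(i+j)k'}N_{BCij}(a,b)\bigr)^{-1}.
\]
The commutator lies in $H$, and each remaining factor $X_{iB+jC}(\cdots)$ has argument divisible by $Z^{3k'}$ and $|\lev(iB+jC)|>d$, hence lies in $H$ by the inductive hypothesis, since $3k'\ge\max_{e>d}k_e$. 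As $N_{BC11}$ is $R$-bilinear, it follows that $X_A(Z^{2k'}v)\in H$ for every $v$ in the submodule of $V_A$ spanned by the values of $N_{BC11}$ (any failure of $X_A$ to be additive when $2A\in\Phi_P$ inserts only extra $X_{2A}$-factors, of higher level and thus in $H$). Whenever that submodule is all of $V_A$ --- which by Lemma~\ref{lem:comm}(1) is so if $\Phi$ is simply laced, and in most of the remaining situations --- we set $k_d=2k'$ and the inductive step is complete.

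The remaining case --- and the main obstacle --- is that of long relative roots $A$ in a component of type $B$, $C$, $F_4$ or $G_2$ for which $N_{BC11}$ reaches only $2V_A$ (or $3V_A$, in the $G_2$ case) for every admissible splitting. Here I would invoke the classification of~\cite{PS-tind} to cut down to a short explicit list of configurations $(\Phi_0,\Phi_{0,P})$, and supplement the commutator above by means of the identities of Lemma~\ref{lem:comm}(2) (available since $\Phi_0\neq G_2$) and, in the configuration $\Phi_0=C_l$, $l>2$, $\Phi_{0,P}=C_2$, of Lemma~\ref{lem:comm2}; the auxiliary terms appearing there are either of level $>d$ in absolute value, hence already in $H$, or involve a strictly lower root that the explicit list lets one handle in advance, so that the induction still closes. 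In the genuinely small configurations $\Phi_0=B_2=C_2$ and $\Phi_0=G_2$ no such room is available, and one produces the missing elements of $H$ by an ad hoc device --- for example, from a difference of two commutators arranged so that the contributions at all levels below that of $A=2B+C$ cancel while the contribution in $V_A$ differs by a factor of the form $s^2-s$, which puts $X_A(w)$ into $H$ for all $w$ in the $R'[Z]$-submodule of $V_A\otimes Z^kR'[Z]$ generated by the products $(s^2-s)v$ with $s\in R'$ and $v\in V_A\otimes Z^kR'[Z]$. The hypothesis that the residue field of $R$ --- hence of every $R$-algebra $R'$ --- is not $\FF_2$ guarantees that no maximal ideal of $R'$ contains all the $s^2-s$, so that this submodule is everything; that is exactly what is needed to exhaust $V_A$. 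This is the point at which the proof meets, and accounts for, the classical non-perfectness of the split groups $B_2(\FF_2)$ and $G_2(\FF_2)$. Everything else is bookkeeping of levels and of powers of $Z$, which terminates because $\Phi_P$ is finite.
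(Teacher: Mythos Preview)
Your outline is correct and follows the paper's argument: the same downward induction on $|\lev A|$, the same use of Lemma~\ref{lem:decroots} to produce $A=B+C$ with only higher-level error terms, the same appeal to the classification in~\cite{PS-tind} together with Lemmas~\ref{lem:comm} and~\ref{lem:comm2} for the multiply-laced configurations, and the same $s^2-s$ device in the split $B_2$ and $G_2$ cases.

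Two minor differences are worth recording. First, the paper does not pass to a minimal parabolic; it reduces (via the simply connected cover and Weil restriction) to a simple group with irreducible $\Phi$, and then for each type uses Lemma~\ref{lem:incl} to replace $P$ by a \emph{specific} convenient parabolic dictated by the classification, keeping $\Phi_P$ small and the case check short. Second, your $B_2$/$G_2$ trick is more elaborate than needed: since $R$ is local with residue field $\neq\FF_2$, there is a single $\eps\in R$ with $\eps^2-\eps\in R^*$, and its image is a unit in every $R$-algebra $R'$; the paper uses this fixed $\eps$ rather than the ideal generated by all $s^2-s$ over $R'$. (Your route is valid --- indeed no $R$-algebra can have residue field $\FF_2$, since a surjection $R'\to\FF_2$ would force $R/M\hookrightarrow\FF_2$ --- just longer.) In the split $C_2$ case the paper reaches the long root via one simple commutator and one nested commutator $[X_{A_2}(u),[X_{A_1+A_2}(s),X_{-A_2}(t)]]$ rather than a difference of two simple commutators as you sketch; either works.
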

\begin{proof}
Let $\der(G)$ be the algebraic derived subgroup of the reductive group scheme $G$ (see~\cite[Exp. XXII, 6.2]{SGA}).
Then, clearly, $E_P(R)\subseteq \der(G)(R)$. Since $\der(G)$ is a semisimple group, we can assume that
$G$ is semisimple. Moreover, since the canonical projection $G^{sc}\to G$, where $G^{sc}$ is the simply
connected semisimple group corresponding to $G$, is surjective on $U^\pm(R)$, we can assume that $G$ is simply
connected. Any simply connected semisimple group
is a direct product of simply connected semisimple groups that cannot be decomposed into a product
of smaller semisimple groups.
These groups $G_i$, $i=1,\ldots,n$, are Weil restrictions
of certain simple reductive groups $G_i'$ over a finite \'etale extension $S$ of $R$: $G_i={\mathrm R}_{S/R}(G_i')$~\cite[Exp. XIV Prop. 5.10]{SGA}.
Note that the group of $R$-points $G_i(R)$ is canonically isomorphic to the group of $S$-points $G_i'(S)$.
This isomorphism also respects the embedding $P_i(R)\to G_i(R)$, for any parabolic subgroup $P_i$ of $G_i$.
Then, clearly, we can assume from the very beginning that $G$ is a simple reductive group, and the root
system $\Phi$ of $G$ is irreducible.

Note that by Lemma~\ref{lem:incl} we can substitute $P$ by any other strictly proper parabolic subgroup $P'$ of $G$
such that $P\le P'$ or $P'\le P$.
Further,  over $R$ we have a set of relative root
subschemes $X_A(V_A)$, $A\in\Phi_P$, corresponding to $P$.

We are going to show by induction on
$|\lev A|$ that for
any $A\in\Phi_P$ there exists an integer $k=k(A)>0$ such that for any $R$-algebra $R'$ and any
$v\in V_A\otimes_R R'$ one has
\begin{equation}\label{eq:Z^k}
X_A(Z^kv)\in [E_P(ZR'[Z]),E_P(ZR'[Z])].
\end{equation}
The claim of the lemma then follows by substituting $Z$ by $Z^m$ and $R'$ by $R[Z]$, and taking the final $k$
to be the maximum of all $k(A)$, $A\in\Phi_P$.

Recall that by Lemma~\ref{lem:decroots}
there exist non-collinear relative roots $B,C\in\Phi_P$ such that $A=B+C$ and
and all roots $iB+jC\in\Phi_{J,\Gamma}$, $i,j>0$, $(i,j)\neq (1,1)$, have the same sign as
$A$ and satisfy $|\lev (iB+jC)|>|\lev (A)|$. Assume
that the map $N_{BC11}:V_B\times V_C\to V_A$ is surjective. Then by the
generalized Chevalley commutator formula~\eqref{eq:Chev} we have
that for any $R$-algebra $R'$, and any $v\in V_A\otimes_R R'[Z]$,
$$
X_A(Z^k v)=[X_B(Zu_{10}),X_C(Z^{k-1}u_{01})]\cdot\prod\limits_{
\begin{array}{c}
\st i,j>0;\\
\st (i,j)\neq(1,1)
\end{array}
}
X_{iB+jC}(Z^{i+j(k-1)}u_{ij})
$$
for some $u_{ij}\in V_{iB+jC}\otimes_R R'[Z]$, $i,j>0$, $(i,j)\neq (1,1)$. Then, by the inductive hypothesis,
\eqref{eq:Z^k} holds for $k$ large enough.

Now for any relative root $A\in\Phi_P$ (for a suitable choice of the  parabolic subgroup $P$) we either
show that for {\it any} decomposition $A=B+C$, where $B$ and $C$ are non-collinear, the map $N_{BC11}$ is surjective;
or provide an explicit decomposition of $X_A(Z^k v)$, $v\in V_A\otimes_R R'$, into a product
of commutators in $E_P(ZR'[Z])$, so that~\eqref{eq:Z^k} is satisfied for $k$ large enough.

Assume first that all structure constants of the root system $\Phi$ of $G$ are invertible in $R$;
this includes the case where $\Phi$ is simply laced. Then
by Lemma~\ref{lem:comm} the map $N_{BC11}$ is surjective
for any decomposition $A=B+C$, where $B$ and $C$ are non-collinear.

Consider the case where $\Phi=\Phi_P=C_2$, so $G$ is split. Let $\Pi=\{A_1,A_2\}$, $\Phi^+=\{A_1,A_2,A_1+A_2,2A_1+A_2\}$.
Let $M$ be the maximal ideal of $R$.
By the hypothesis of Theorem~\ref{th:main}, $R/M\not\cong F_2$,
hence we can take $\eps\in R\setminus M$ such that $\eps^2-\eps\in R\setminus M=R^*$.
If the root $A\in\Phi_P$ is long, we can assume that $A=2A_1+A_2$. Let
$$
g_1(s,t)=[X_{A_1}(s),X_{A_2}(t)]=X_{A_1+A_2}(st)X_{2A_1+A_2}(s^2t)
$$
and
$$
g_2(s,t,u)=[X_{A_2}(u),[X_{A_1+A_2}(s),X_{-A_2}(t)]]=X_{A_1+A_2}(-stu)X_{2A_1+A_2}(-s^2t^2u).
$$
Therefore,
$$
g_1(Z^2,-Z^{k-4}\eps(\eps^2-\eps)^{-1}v)
\cdot g_2(Z,Z\eps,-Z^{k-4}(\eps^2-\eps)^{-1}v)=X_{2A_1+A_2}(Z^kv).
$$
If the root $A\in\Phi_P$ is short, we can assume that $A=A_1+A_2$, hence
$$
g_1(Z,Z^{k-1}v)\cdot X_{2A_1+A_2}(-Z^{k+1}v)=X_{A_1+A_2}(Z^kv).
$$
This means that~\eqref{eq:Z^k} holds for these roots for any $k\ge 5$.

Consider the case where $\Phi=\Phi_P=G_2$, so $G$ is split. Let $\Pi=\{A_1,A_2\}$,
$\Phi^+=\{A_1,A_2,A_1+A_2,2A_1+A_2,3A_1+A_2,3A_1+2A_2\}$.
By the hypothesis of Theorem~\ref{th:main}, $R/M\not\cong F_2$,
hence we can take $\eps\in R\setminus M$ such that $\eps^2-\eps\in R\setminus M=R^*$.
If the root $A$ is long, we can assume that $A=3A_1+2A_2$. Then
$$
[X_{A_2}(Zv),X_{3A_1+A_2}(Z^{k-1})]=X_{3A_1+2A_2}(Z^kv).
$$
Therefore,~\eqref{eq:Z^k} holds for long roots for any $k\ge 2$.
If the root $A$ is short, we can assume that $A=2A_1+A_2$. Then
$$
[X_{A_1}(s),X_{A_2}(t)]=X_{A_1+A_2}(st)\cdot X_{2A_1+A_2}(s^2t)\cdot X_{3A_1+A_2}(s^3t)\cdot X_{3A_1+2A_2}(s^3t^2).
$$
Hence,
\begin{multline}
[X_{A_1}(Z\eps),X_{A_2}(-(\eps^2-\eps)^{-1}Z^{k-2}v)]^{-1}\cdot
[X_{A_1}(Z),X_{A_2}(-\eps(\eps^2-\eps)^{-1}Z^{k-2}v)]=\\
X_{2A_1+A_2}(Z^kv)X_{3A_1+A_2}((\eps+1)Z^{k+1}v)X_{3A_1+2A_2}(\eps(\eps^2-\eps)^{-1}Z^{2k+1}v),
\end{multline}
and the roots $3A_1+A_2$ and $3A_1+2A_2$ are long.
This means that~\eqref{eq:Z^k} holds for short roots for any $k\ge 3$.

We are left with the case where $\Phi$ is of type $B_l$, $C_l$, or $F_4$.
Recall that by the hypothesis of Theorem~\ref{th:main} $G$ contains a split torus of rank $\ge 2$.
Hence in the $F_4$ case the classification of Tits indices over local rings~\cite{PS-tind} says that
$G$ is a split group. Hence we can assume that $P$ is a Borel subgroup of $G$,
and $\Phi_P=\Phi$ is a root system of type $F_4$.
Then if the root $A\in\Phi_P$ is short, by Lemma~\ref{lem:comm} the map $N_{BC11}$ is surjective
for any non-collinear $B,C\in\Phi_P$ such that $A=B+C$. If this root is long, then it belongs to the long root subsystem of
$F_4$, which has type $D_4$. Then (for example, by Lemma~\ref{lem:decroots}) we can find two long roots
$B,C\in\Phi_P$, such that $B+C=A$, and necessarily, $iB+jC$ is not a root
for any $i,j>0$ distict from $i=j=1$. Then
$$
X_A(Z^k v)=[X_B(Zu_{10}),X_C(Z^{k-1}u_{01})]
$$
for some $u_{10}\in V_{B}\otimes_R R'[Z]$ and $u_{01}\in V_C\otimes_R R'[Z]$.

Consider the case where $\Phi$ is of type $B_l$, $l\ge 3$.
By the classification of Tits indices over local rings~\cite{PS-tind}, we can assume that $P$ is a
parabolic subgroup of type $\Pi\setminus J$, where $\Pi$ is a set of simple roots of $\Phi$ and
$J=\{\alpha_1,\alpha_2\}$. Then $\Phi_P$ can be identified with a root system of type $B_2$. One readily
sees, using the fact that $l\ge 3$,
that for any relative root $A\in\Phi_P$ and any pair $B,C\in\Phi_P$ satisfying $A=B+C$, we can find
a pair of long roots $\beta\in\pi^{-1}(B)$, $\gamma\in\pi^{-1}(C)$ such that $\beta+\gamma$ is a root
(one can assume that $A$ is one of two simple roots of $\Phi_P$, due to the lifting of the relative
Weyl group~\cite[Exp. XXVI Th. 7.13 (ii)]{SGA}). Then by Lemma~\ref{lem:comm}
the map $N_{BC11}$ is surjective.

It remains to consider the case where $\Phi$ is of type $C_l$, $l\ge 3$.
First assume that $P$ is a parabolic subgroup of type $\Pi\setminus J$, where $J=\{\alpha_i,\alpha_j\}$
for two short simple roots $\alpha_i,\alpha_j$ of $\Phi$.
Then $\Phi_P$ can be identified with a root system of type $BC_2$. One readily sees that for all
extra-short and short relative roots $A\in\Phi_P$ the set $\pi^{-1}(A)$ consists of short roots, and
hence by Lemma~\ref{lem:comm} the map $N_{BC11}$ is surjective for any decomposition $A=B+C$.
Let $A$ be a long root. Let $A_1$ and $A_2$ be a short and an extra-short simple roots of $\Phi_P$.
We can assume
without loss of generality that $A=2A_1+2A_2$. Take $k\ge 4$. Then by Lemma~\ref{lem:comm} (2) and by the generalized
Chevalley commutator formula, for any
$R$-algebra $R'$, and any $v\in V_A\otimes_R R'[Z]$, we have
$$
X_A(Z^k v)=[X_{A_1}(Zu_1),X_{2A_2}(Z^{k-2}u_2)]\cdot X_{A_1+2A_2}(Z^{k-1}u_3)
$$
for some $u_1\in V_{A_1}\otimes_R R'[Z]$, $u_2\in V_{2A_2}\otimes_R R'[Z]$, and
$u_3\in V_{A_1+2A_2}\otimes_R R'[Z]$. Further, by Lemma~\ref{lem:comm} (1) and by the generalized
Chevalley commutator formula, since $\pi^{-1}(A_1+2A_2)$ consists of short roots, we have
$$
X_{A_1+2A_2}(Z^{k-1}u_3)=[X_{A_1+A_2}(Zu_4),X_{A_2}(Z^{k-3}u_5)]
$$
for some $u_4\in V_{A_1+A_2}\otimes_R R'[Z]$ and $u_5\in V_{A_2}\otimes_R R'[Z]$.
Hence~\eqref{eq:Z^k} holds for $A$ for any $k\ge 4$.

By the classification of Tits indices over local rings~\cite{PS-tind}, the only remaining case
is when $P$ is a parabolic subgroup of type $\Pi\setminus J$ for $J=\{\alpha_i,\alpha_l\}$,
where $l=2i$. Now $\alpha_i$ is short, $\alpha_l$ is long, and $\Phi_P$ can be identified with
a root system of type $B_2=C_2$. As in Lemma~\ref{lem:comm2}, we put $A_1=\pi(\alpha_i)$,
$A_2=\pi(\alpha_l)$.
Then if the root $A\in\Phi_P$ is short, by the lifting of the relative Weyl group~\cite[Exp. XXVI Th. 7.13 (ii)]{SGA}),
we can assume that $A=A_1+A_2$. By Lemma~\ref{lem:comm2} for any $R$-algebra $R'$, and any $v\in V_{A}\otimes_R R'[Z]$,
we have
$$
X_{A}(Z^k v)=\prod_i[X_{A_1}(Zv_i),X_{A_2}(Z^{k-1}u_1)]
$$
for some $u_1\in V_{A_2}\otimes_R R'[Z]$, $v_i\in V_{A_1}\otimes_R R'[Z]$.
If the root $A\in\Phi_P$ is long, we can assume that $C=2A_1+A_2$. By Lemma~\ref{lem:comm2} for any $R$-algebra $R'$, and any
$v\in V_{2A_1+A_2}\otimes_R R'[Z]$, we have
$$
X_{A}(Z^k v)=[X_{A_1}(Zu_1),X_{A_1+A_2}(Z^{k-1}u_2)]\cdot\prod_i[X_{A_1}(Zv_i),X_{A_2}(Z^{k-2}u_3)]
$$
for some $u_1\in V_{A_1}\otimes_R R'[Z]$, $u_2\in V_{A_1+A_2}\otimes_R R'[Z]$,
$u_3\in V_{A_2}\otimes_R R'[Z]$, $v_i\in V_{A_1}\otimes_R R'[Z]$.
Hence~\eqref{eq:Z^k} holds for $A$ for any $k\ge 3$.
\end{proof}

\begin{proof}[Proof of Theorem~\ref{th:main}]
Recall that we can represent $\Spec(R)$ as a finite disjoint union $\Spec(R)=\coprod\limits_{i=1}^n\Spec(R_i)$, so that
$E(R)=\prod\limits_{i=1}^n E(R_i)$ and for any $1\le i\le n$ we have
$$
E(R_i)=\l<X_A(V_A),\ A\in\Phi_{P_{R_i}}\r>,
$$
for a set of root subschemes $X_A$, $A\in\Phi_{P_{R_i}}$, over $R_i$. Hence we can assume that
$$
E(R)=\l<X_A(V_A),\ A\in\Phi_{P}\r>
$$
from the very beginning.

We show that $[E(R),E(R)]$ contains any $X_A(v)$, $A\in \Phi_P$, $v\in V_A$, by induction
on $|\lev A|$.
Take
$$
I=\{s\in R\ |\ X_A(tsv)\in [E(R),E(R)]\ \forall\ t\in R\}.
$$
By~\cite[Th. 2]{PS} for any $u,u'\in V_A$ we have
$$
X_A(u+u')=X_A(u)X_A(u')\prod_{i>0}X_{iA}(u_i)
$$
for some $u_i\in V_{iA}$. Hence by the inductive hypothesis $I$ is an ideal
of $R$.
If $I\neq R$, let $M$ be a maximal ideal of $R$ containing $I$. Let $F_M$ denote
both the localization homomorphism $R\to R_M$ and the induced homomorphism $G(R[Y,Z])\to G(R_M[Y,Z])$.
By Lemma~\ref{lem:local}
there is an $m>0$ such that
we can represent the element $F_M(X_A(Z^mYv))$ of $G(R_M[Y,Z])$ as a product
$$
F_M(X_A(Z^mYv))=\prod_{i=1}^n [h_i(Y,Z),g_i(Y,Z)],
$$
for some $h_i(Y,Z),g_i(Y,Z)\in E(ZR_M[Y,Z])$, $1\le i\le n$.
By~\cite[Lemma 15]{PS} there exist
$$
h'_i(Y,Z),\,g'_i(Y,Z)\in E_P(R[Y,Z],ZR[Y,Z])\le E_P(R[Y,Z])=E(R[Y,Z])
$$
and $s\in R\setminus M$ such that $F_M(h'_i(Y,Z))=h_i(Y,sZ)$ and $F_M(g'_i(Y,Z))=g_i(Y,sZ)$ for all $i$.
Hence
$$
F_M\bigl(X_A((sZ)^mYv)\bigr)=F_M\Bigl(\prod_{i=1}^n [h'_i(Y,Z),g'_i(Y,Z)]\Bigr).
$$
Then
by~\cite[Lemma 14]{PS} there exists $t\in R\setminus M$ such that
$$
X_A((tsZ)^mYv)=\prod_{i=1}^n [h'_i(Y,tZ),g'_i(Y,tZ)].
$$ Substituting $Z$ by $1$ and $Y$ by an arbitrary element
of $R$, we see that $(ts)^m\in I$. But $(ts)^m\in R\setminus M$, a contradiction.
\end{proof}

The authors are sincerely grateful to Victor Petrov and Nikolai Vavilov for their inspiring comments on
the subject of this paper.

\renewcommand{\refname}{References}

\end{document}